\numberwithin{equation}{section}
\newtheorem{theorem}{Theorem}[section]
\newtheorem{proposition}[theorem]{Proposition}
\newtheorem{lemma}[theorem]{Lemma}
\theoremstyle{definition}
\newtheorem{definition}[theorem]{Definition}
\theoremstyle{remark}
\newtheorem{remark}[theorem]{Remark}
\newcommand{\ShortExactSeq}[3]%
{0\to{#1}\to{#2}\to{#3}\to 0}
\newcommand{\DShortExactSeq}[3]%
{\xymatrix@1{0\ar[r]&{{#1}}\ar[r]&{{#2}}\ar[r]&{{#3}}\ar[r]&0}}
\newcommand{\Ahat}{\widehat{\mathcal A}}
\newcommand{\Bounded}{{\frak B}}
\newcommand{\Index}{\operatorname{\rm Index}}
\newcommand{\R}{\mbox{$\Bbb R$}}
\newcommand{\Supp}{\operatorname{\rm Supp}}
\newcommand{\Z}{\mbox{$\Bbb Z$}}
\renewcommand{\ge}{\geqslant}
\renewcommand{\le}{\leqslant}
 \renewcommand{\phi}{\varphi}
\renewcommand{\epsilon}{\varepsilon}
\newcounter{ritmctr}
{\end{itemize}}
\newcounter{aitmctr}
{\end{itemize}}
\begin{document}
\bibliographystyle{plain}

\title{Positive curvature, partial vanishing theorems and coarse indices}

\author{John Roe}
\address{Department of Mathematics, Penn State University, University
Park PA 16802}
\date{\today}
\maketitle

\section{Introduction}
Let $M$ be a complete Riemannian manifold and let $D$ be a generalized Dirac operator acting on sections of a Clifford bundle $S$ over $M$.  It is well-known (see for example~\cite{GrLa}) that there is a Weitzenbock formula
\[ D^2 = \nabla^*\nabla + R , \]
where $R$ is a certain self-adjoint endomorphism of $S$ constructed out of the curvature.  (For example, in the classical case of the Dirac operator associated to a spin-structure, $R$ is pointwise multiplication by $\frac14$ times the scalar curvature~\cite{Li}).

The author's coarse index theory associates to $D$ an index that lies in the $K$-theory of the ``translation $C^*$-algebra'' $C^*(M)$.  As in the classical case, the index vanishes if the curvature operator is uniformly bounded below by a positive constant.  In \cite[Proposition 3.11]{JR18} this statement is generalized as follows.  Suppose that there is a subset $Z\subseteq M$, such that for some constant $a>0$ one has $R_x \ge a^2I$ (as self-adjoint endomorphisms of $S_x$) for all $x\notin Z$ --- we will then say that the operator $R$ is \emph{uniformly positive outside $Z$}.   Then the index of $D$ lies in the image of the map
\[ K_*(C^*(Z)) \to K_*(C^*(X)), \]
where $Z$ is considered as a metric subspace of $X$.  In particular, if the curvature is uniformly positive outside a compact set $Z$ (so that $C^*(Z)$ is the compact operators), one recovers the result of Gromov and Lawson~\cite[Chapter 3]{GrLa} that $D$ has an index in the ordinary Fredholm sense.

I included only the briefest sketch of a proof of this proposition in~\cite{JR18}.  This note is a response to several requests for more detail, and also mentions a couple of applications of the idea.

\section{The main result}
Let $Z$ be a subset of a proper metric space $X$ and let $H$ be an ample $X$-module (i.e. a Hilbert space which is a ``sufficiently large'' module over $C_0(X)$, assumed fixed --- I refer to~\cite{HR3} for terminology.  The module action is denoted by $\rho\colon C_0(X)\to\Bounded(H)$.).  The $C^*$-algebra $C^*(X)$ (or $C^*(X;H)$ if it is important to keep track of the particular Hilbert space) is then defined to be the norm closure of the controlled, locally compact operators on $H$, where we recall that a \emph{controlled} (a.k.a. \emph{finite propagation}) operator $T$ has the property that there is a constant $r$ for which
\[ d(\Supp \phi,\Supp \psi)>r \Longrightarrow \rho(\phi) T \rho(\psi) = 0 \]
for all $\phi,\psi \in C_c(X)$.  A controlled operator $T$ is \emph{supported near $Z$} if there is another constant $r'$ for which
\[ d(\Supp\phi, Z) > r' \Longrightarrow \rho(\phi) T = 0 = T \rho(\phi). \]
The norm closure of the set of controlled, locally compact operators supported  near $Z$ is an ideal in $C^*(X)$, which we denote\footnote{It is denoted $C^*_X(Z)$ in~\cite{JR18}, but the other notation now seems better to me.} by  $C^*(Z\subseteq X)$. It is easy to see~\cite{HRY} that the $K$-theory of $C^*(Z\subseteq X)$ is the same as that of $C^*(Z)$, if $Z$ is considered as a metric space in its own right.

Now we recall the relation of these concepts to index theory.  Suppose that $X$ is actually a complete Riemannian manifold and that $S$ is a Clifford bundle, and let $H=L^2(X;S)$ in forming the algebras above.  The algebra $D^*(X)$ is defined to be the norm closure of the controlled, \emph{pseudolocal} operators on $H$: it is a unital $C^*$-algebra, and $C^*(X)$ is an ideal in it.  The following key analytic lemma~\cite[Chapter 10]{HR3} can be proved by the finite propagation speed method:

\begin{lemma}\label{L1} Let $X$ be a complete Riemannian manifold, as above, and let $S$ be a Clifford bundle over it.  Let $D$ denote the Dirac operator of $S$, considered as an unbounded, self-adjoint operator on $H=L^2(X;S)$. If $f$ is a bounded continuous function on $\R$ that has finite limits at $\pm\infty$, then $f(D)\in D^*(X;H)$.  If $f$ tends to zero at $\pm\infty$, then $f(D)\in C^*(X;H)$.  \quad\qedsymbol \end{lemma}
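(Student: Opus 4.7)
The plan is Fourier synthesis via the wave equation, followed by a Stone--Weierstrass density argument. The essential analytic input is \emph{finite propagation speed}: on the complete manifold $X$, the wave group $e^{itD}$ is a controlled operator of propagation at most $|t|$, which follows by a standard energy estimate for the hyperbolic system $\partial_t u = iDu$.

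First I would reduce to a density statement. Since $D^*(X;H)$ is a unital $C^*$-algebra, $C^*(X;H)$ is an ideal in it, and $f \mapsto f(D)$ is a norm-contractive unital $*$-homomorphism from $C_b(\R)$ to $\Bounded(H)$, the sets
\[ \mathcal A_C = \{f \in C_0(\R) : f(D) \in C^*(X;H)\}, \qquad \mathcal A_D = \{f \in C(\overline\R) : f(D) \in D^*(X;H)\} \]
(with $\overline\R = \R \cup \{\pm\infty\}$ the two-point compactification) are norm-closed $*$-subalgebras of $C_0(\R)$ and $C(\overline\R)$. It suffices to verify each inclusion on a dense subalgebra.

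Next, the main computation. For Schwartz $f$ with $\hat f \in C_c^\infty(\R)$ supported in $[-r,r]$, Fourier inversion gives the norm-convergent integral $f(D) = (2\pi)^{-1/2}\int_{-r}^r \hat f(t) e^{itD}\, dt$, so $f(D)$ has propagation at most $r$. Because $x^N f(x)$ is bounded for every $N$, $f(D)$ carries $H$ into $\bigcap_N \dom(D^N)$, and local elliptic regularity for $D$ together with Rellich's theorem then shows $\rho(\phi) f(D)$ is compact for $\phi \in C_c(X)$. The Duhamel expansion
\[ [\rho(\phi), e^{itD}] = -i\int_0^t e^{i(t-s)D}[D,\rho(\phi)] e^{isD}\, ds, \]
combined with the fact that $[D, \rho(\phi)] = c(d\phi)$ is a bounded, compactly supported Clifford action, similarly yields compactness of $[\rho(\phi), f(D)]$. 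Hence $f(D) \in C^*(X;H)$. Since such band-limited Schwartz functions are uniformly dense in $C_0(\R)$ (truncate $\hat g$ by a bump), this proves $\mathcal A_C = C_0(\R)$, establishing the second assertion.

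Finally, for $\mathcal A_D = C(\overline\R)$: since $\mathcal A_D$ is a closed unital $*$-subalgebra of $C(\overline\R)$ containing $C_0(\R)$, Stone--Weierstrass reduces the problem to exhibiting one \emph{normalizing function} $\chi$ with $\chi(+\infty) \ne \chi(-\infty)$ lying in $\mathcal A_D$. Taking $\chi(x) = x(1+x^2)^{-1/2}$, pseudolocality of $\chi(D) = D(1+D^2)^{-1/2}$ follows by expanding
\[ [\rho(\phi), \chi(D)] = -[D,\rho(\phi)](1+D^2)^{-1/2} + D\bigl[\rho(\phi), (1+D^2)^{-1/2}\bigr] \]
and using the integral representation $(1+D^2)^{-1/2} = \pi^{-1}\int_0^\infty (s+1+D^2)^{-1}s^{-1/2}\, ds$ together with the resolvent commutator identity $[\rho(\phi), (s+1+D^2)^{-1}] = -(s+1+D^2)^{-1}[D^2,\rho(\phi)](s+1+D^2)^{-1}$, which together express $[\rho(\phi), \chi(D)]$ as a norm-convergent integral of compact operators. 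The main technical obstacle is controlled approximability of $\chi(D)$: because $\chi$ has distinct limits at $\pm\infty$, it cannot be uniformly approximated on $\R$ by functions with compactly supported Fourier transform, so the controlled approximations have to be assembled by hand from band-limited approximations to each resolvent $(s+1+D^2)^{-1}$, truncated carefully against the resolvent integral so that the total operator-norm error is controlled.
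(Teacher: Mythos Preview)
The paper does not prove this lemma itself; it cites \cite[Chapter 10]{HR3} and invokes the finite propagation speed method. Your treatment of the $C^*$-assertion follows exactly that standard line (Fourier synthesis from the wave group, local compactness via elliptic regularity and Rellich, then density) and is correct.

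For the $D^*$-assertion, however, your argument is incomplete. You choose $\chi(x)=x(1+x^2)^{-1/2}$, whose Fourier transform is \emph{not} compactly supported, and then concede that controlled approximability of $\chi(D)$ is a ``technical obstacle'' requiring band-limited approximations to each resolvent $(s+1+D^2)^{-1}$ to be ``truncated carefully''---but you never carry this out. The difficulty is genuine: naively truncating the $s$-integral at $s=N$ leaves a remainder whose operator norm does not tend to zero (evaluate at $x\to\infty$), so one really must estimate, for each $s$, how well the resolvent is approximated by an operator of propagation $\le R$ (using that the Fourier transform of $(s+1+x^2)^{-1}$ decays like $e^{-\sqrt{s+1}\,|\xi|}$) and then check that the errors are integrable in $s$. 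This can be made to work, but as written it is a gap.

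The standard route in \cite{HR3}, which the present paper relies on later (see the proof of Lemma~\ref{rel2}, where one ``chooses the normalizing function $\chi$ to have Fourier transform supported in $(-r,r)$''), avoids the problem entirely: pick a normalizing function whose \emph{distributional} Fourier transform is compactly supported---for instance take $\chi'\in\mathcal S(\R)$ even, with $\int\chi'=2$ and $\widehat{\chi'}\in C_c(\R)$, and set $\chi(x)=\int_0^x\chi'$. Then $\chi(D)$ has \emph{exact} finite propagation, so membership in $D^*(X)$ reduces to pseudolocality, which your commutator argument already handles. Since any two normalizing functions differ by an element of $C_0(\R)$, your Stone--Weierstrass reduction then finishes the proof.
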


A \emph{normalizing function} $\chi\colon \R \to [-1,1]$ is, by definition, a continuous, odd function that tends to $\pm 1$ at $\pm\infty$.  Given such a function $\chi$, it follows from the preceding lemma that $\chi(D)\in D^*(X)$ and $\chi(D)^2-1\in C^*(X)$.  Moreover, if $\chi_1$ and $\chi_2$ are two normalizing functions, then it similarly follows that $\chi_1(D)-\chi_2(D)\in C^*(X)$.   Thus the equivalence class of $\chi(D)$ gives a well-defined  self-adjoint involution in $D^*(X)/C^*(X)$, defining an element $[\chi(D)]\in K_{j+1}(D^*(X)/C^*(X))$ ($j$ is determined by the grading of the operator --- it is equal to the parity of $\dim X$).  Now we have

\begin{definition} With the notation of Lemma~\ref{L1}, the \emph{coarse index} of $D$ is
\[ \Index(D) = \partial[\chi(D)] \in K_j(C^*(X)), \]
where $\partial\colon K_{j+1}(D^*(X)/C^*(X)) \to K_j(C^*(X))$ is the boundary map in the long exact sequence of $C^*$-algebra $K$-theory. \end{definition}

Now let $Z\subseteq X$ as above.  The algebra $C^*(Z\subseteq X)$ is an ideal in $D^*(X)$ (not just in $C^*(X)$).   To prove our result we will need to sharpen Lemma~\ref{L1} as follows:

\begin{lemma}\label{L2} Let notation be as in Lemma~\ref{L1}.  Suppose that the curvature operator $R=R_D$ that appears in the Weitzenbock formula for $D$,
 \[ D^2 = \nabla^*\nabla + R_D, \]
 is uniformly positive outside $Z$, say $R_x\ge a^2I $ for $x\notin Z$.  Then for any $f\in C_c(-a,a)$ we have $f(D) \in C^*(Z\subseteq X)$. \end{lemma}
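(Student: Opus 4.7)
I approximate $f(D)$ in norm by operators supported in finite neighborhoods of $Z$. Let $\eta_R\colon X\to[0,1]$ be a smooth cutoff equal to $1$ on the $R$-neighborhood $Z_R$ and vanishing outside $Z_{2R}$, with $|d\eta_R|$ of order $1/R$, and set $T_R = \eta_R\, f(D)\,\eta_R$.  By Lemma~\ref{L1}, $f(D) \in C^*(X)$, so it is a norm limit of controlled, locally compact operators; multiplying those approximants by $\eta_R$ on both sides preserves both properties and confines support to $Z_{2R}$, so $T_R\in C^*(Z\subseteq X)$.  Writing
\[ f(D) - T_R = (1-\eta_R)\, f(D) + \eta_R\, f(D)\,(1-\eta_R), \]
and using that the second summand is the adjoint of an operator of the first type (split $f$ into real and imaginary parts if needed), the problem reduces to showing that $\|(1-\eta_R)\, f(D)\|\to 0$ as $R\to\infty$.

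\textbf{Agmon-type exponential decay.} The main analytic input is the following weighted estimate. Choose $b<a$ with $\Supp f \subseteq[-b,b]$ and fix $\lambda\in(0,\sqrt{a^2-b^2})$.  I claim that
\[ \|e^{\lambda\, d(\cdot,Z)}\, f(D)\, v\|_{L^2} \le C\,\|f\|_\infty\,\|v\|_{L^2}\qquad(v\in L^2(X;S)), \]
with $C$ independent of $v$.  Granted this, $\|(1-\eta_R)\, f(D)\|\le C\|f\|_\infty\, e^{-\lambda R}\to 0$, finishing the reduction. To prove the weighted estimate, work with the bounded truncated weight $w_N(x) = \exp(\lambda\min(d(x,Z),N))$, which satisfies $w_N|_Z\equiv 1$ and $|dw_N|\le \lambda w_N$.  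For $u = f(D) v$, compute $\langle D^2(w_N u), w_N u\rangle = \|D(w_N u)\|^2$ in two ways.  First, via Weitzenbock and the pointwise bound $R\ge a^2$ off $Z$, one obtains a lower bound of the form $a^2\,\|w_N u\|^2 - (a^2+C_1)\,\|u\|^2_{L^2(Z)}$, where $C_1$ absorbs a lower bound for $R|_Z$.  Second, the identity $D(w_N u) = w_N Du + c(dw_N) u$, combined with the spectral bound $\|Du\|\le b\|u\|$ and the Lipschitz estimate $|dw_N|\le\lambda w_N$, produces an upper bound of order $(b^2 + O(\lambda))\|w_N u\|^2$.  For $\lambda$ sufficiently small the two estimates combine to give $\|w_N u\|^2 \le C_2\,\|u\|^2$ uniformly in $N$; letting $N\to\infty$ and invoking monotone convergence yields the claimed weighted inequality.

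\textbf{Main obstacle.} The technical heart of the argument is the Agmon weighted estimate: handling the cross terms produced when $D$ is commuted past $w_N$ and balancing them against the positivity $R\ge a^2$ outside $Z$ so as to extract a coercive inequality.  The device of working with the bounded truncated weight $w_N$ throughout, and only passing to the limit $N\to\infty$ at the end, avoids all domain-of-definition issues for the unbounded weight $e^{\lambda d(\cdot,Z)}$; this is the standard Agmon trick adapted to the Dirac setting.  A minor side point is that the constant $C_1$ in the Weitzenbock lower bound depends on a lower bound for $R|_Z$; if none is available one applies a further truncation to $w_N$ to keep the argument closed.
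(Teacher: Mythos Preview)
Your overall strategy --- approximate $f(D)$ by the cut-off operators $\eta_R f(D)\eta_R$ and show the remainder goes to zero via an exponential weighted estimate --- is a reasonable alternative to the paper's method, but the Agmon estimate as you sketch it has a genuine gap.

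The problem is in ``Way 2'' of your two-way computation.  You write $D(w_N u)=w_N Du + c(dw_N)u$ and then invoke ``the spectral bound $\|Du\|\le b\|u\|$'' to conclude $\|D(w_N u)\|^2\le (b^2+O(\lambda))\|w_N u\|^2$.  But the spectral bound is an \emph{unweighted} statement: $u$ lies in the spectral subspace of $D$ for $[-b,b]$, hence $\|Du\|\le b\|u\|$.  It does \emph{not} give $\|w_N Du\|\le b\|w_N u\|$, because the multiplication operator $w_N$ does not commute with $D$ (equivalently, the $w_N u_j$ for eigenfunctions $u_j$ are no longer orthogonal).  If $u$ happened to be a genuine eigenfunction, $Du=\mu u$, your argument would go through; for a general element of the spectral subspace it does not.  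Without a bound on $\|w_N Du\|$ in terms of $\|w_N u\|$, the upper and lower estimates cannot be closed to yield $\|w_N u\|\le C\|u\|$ uniformly in $N$.

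The approach is salvageable, but not in one line.  The standard route is Combes--Thomas: conjugate the \emph{resolvent} $(D-z)^{-1}$ by $w_N$, observe that $w_N D w_N^{-1}=D+B$ with $\|B\|\le\lambda$, and deduce uniform bounds on $w_N(D-z)^{-1}w_N^{-1}$ for $|\operatorname{Im} z|>\lambda$.  One then reaches $f(D)$ for $f\in C_c(-a,a)$ via an almost-analytic extension (Helffer--Sj\"ostrand) or a contour argument, using that the resolvent bounds are uniform as $z$ approaches the real axis outside $(-a,a)$.  This is substantially more work than your sketch suggests, and also handles your ``minor side point'' about a lower bound for $R|_Z$ more cleanly.

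For comparison, the paper takes a completely different, wave-equation route.  It first treats even $f$ with $\widehat f$ supported in $(-r,r)$, writes $f(D)=\pi^{-1}\int_0^r\widehat f(t)\cos(tD)\,dt$, and uses finite propagation speed to identify $\cos(tD)$ with $\cos(t\sqrt{E})$ on sections supported well away from $Z$, where $E$ is the Friedrichs extension of $D^2$ on a neighborhood of $X\setminus Z$.  Since $E\ge a^2$, one gets $\|f(D)\rho(\phi)\|\le\sup_{|\lambda|\ge a}|f(\lambda)|\cdot\|\phi\|$ directly from the spectral theorem for $E$ --- no weighted estimates needed.  A density argument then finishes the general case.
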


Suppose that this lemma has been proved.  Then choose a normalizing function $\chi$ such that $\chi^2-1$ is supported in $(-a,a)$.  According to Lemma~\ref{L2}, the equivalence class of $\chi(D)$ is a (well-defined) self-adjoint involution in $D^*(X)/C^*(Z\subseteq X)$.  Following the construction above, we obtain a \emph{localized index}
\[ \Index_Z(D) \in K_j(C^*(Z\subseteq X)) \]
which maps to the previously defined $\Index(D)$ under the K-theory map induced by the inclusion $C^*(Z\subseteq X)\to C^*(X)$.  The existence of this localized index is the precise content of \cite[Proposition 3.11]{JR18}; it implies the version of the result stated in the introduction.  To state it precisely:

\begin{theorem}\label{LCA} Let $M$ be a complete Riemannian   and let $D$ be a Dirac-type operator whose associated curvature endomorphism $R_D$ is uniformly positive outside a subset $Z$ of $M$.  Then the construction above defines a \emph{localized coarse index}
\[ \Index_Z(D) \in K_j(C^*(Z\subseteq X)) \]
which maps to the coarse index $\Index(D)\in K_j(C^*(M))$ under the K-theory map induced by the inclusion $C^*(Z\subseteq X)\to C^*(X)$.  (Here $j$ is the parity of $\dim M$.) \end{theorem}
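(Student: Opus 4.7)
Theorem~\ref{LCA} reduces by a formal $K$-theoretic argument to Lemma~\ref{L2}, which carries all the analytic content. My plan is first to spell out that reduction, then to sketch the proof of Lemma~\ref{L2} via the combination of an Agmon-type exponential decay estimate with finite propagation of the wave operator.

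For the $K$-theoretic reduction, choose any normalizing function $\chi$ with $\chi^2 - 1 \in C_c(-a,a)$; Lemma~\ref{L2} then gives $\chi(D)^2 - 1 \in C^*(Z\subseteq X)$, and for two such normalizations $\chi_1,\chi_2$ the difference $\chi_1 - \chi_2$ also lies in $C_c(-a,a)$, forcing $\chi_1(D) - \chi_2(D) \in C^*(Z\subseteq X)$. Hence $\chi(D)$ descends to a well-defined self-adjoint involution in $D^*(X)/C^*(Z\subseteq X)$ and defines a class in $K_{j+1}$ of that quotient; applying the boundary map of the six-term exact sequence for the ideal $C^*(Z\subseteq X) \triangleleft D^*(X)$ produces $\Index_Z(D) \in K_j(C^*(Z\subseteq X))$. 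The inclusion $C^*(Z\subseteq X) \hookrightarrow C^*(X)$, as an inclusion of ideals in $D^*(X)$, induces a commuting morphism of the two ideal-quotient short exact sequences, so by naturality of the boundary map $\Index_Z(D)$ is sent to $\Index(D)$.

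For Lemma~\ref{L2}, I would first reduce to functions of $D^2$. Writing $f = f_e + f_o$ as the sum of its even and odd parts (and taking $f$ smooth by density), one has $f_e(D) = g_1(D^2)$ and $f_o(D) = D g_2(D^2)$ for some $g_1, g_2 \in C_c([0,a^2))$. Factoring $g_2 = g_2' g_2''$ with both factors in $C_c([0,a^2))$ and $g_2' \equiv 1$ on $\Supp(g_2'')$, one has $D g_2'(D^2) = h(D)$ for $h(\lambda) = \lambda g_2'(\lambda^2) \in C_c(\R)$, and this operator lies in $D^*(X)$ by Lemma~\ref{L1}. Because $C^*(Z\subseteq X)$ is an ideal in $D^*(X)$, the lemma reduces to showing
\[ g(D^2) \in C^*(Z\subseteq X) \quad\text{for every}\quad g \in C_c([0,a^2)). \]

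To prove this, combine two ingredients. First, an Agmon-type estimate: apply the Weitzenbock identity to the weighted section $\phi u$ with $\phi = e^{\beta d(\cdot,Z)}$ (smoothed so that $|\nabla\phi| \le \beta \phi$), view $D^2 = \nabla^*\nabla + R$ as a Schr\"odinger-type operator, and carry out the standard Agmon integration-by-parts; using $R \ge a^2$ outside $Z$ together with the spectral bound $\langle D^2 u, u\rangle \le (a^2 - \delta)\|u\|^2$ for $u$ in the range of $P_{[0,a^2-\delta]}(D^2)$, one obtains $\|\phi u\| \le C\|u\|$ whenever $\beta$ is smaller than an explicit threshold depending on $\delta$. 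This yields $\|(1-\eta_R) g(D^2)\|$ and $\|g(D^2)(1-\eta_R)\|$ bounded by $C_g e^{-\beta R}$ for cutoffs $\eta_R \in C_b(X)$ equal to $1$ on $N_R(Z)$ and supported in $N_{R+1}(Z)$. Second, finite propagation: approximate $g$ uniformly by $g_n$ for which $\lambda \mapsto g_n(\lambda^2)$ has compactly supported Fourier transform, so that $g_n(D^2)$ has finite propagation by the wave equation method (cf.\ Lemma~\ref{L1}). Then $\eta_R g_n(D^2) \eta_R$ is a controlled, locally compact operator strictly supported near $Z$, hence lies in $C^*(Z\subseteq X)$; sending $n \to \infty$ (uniform convergence) and then $R \to \infty$ (Agmon decay) exhibits $g(D^2)$ as a norm limit in $C^*(Z\subseteq X)$. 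The main obstacle is the Agmon estimate: while the scalar Schr\"odinger analog is classical, adapting it to the Dirac setting requires careful absorption of the cross terms arising from the Weitzenbock formula and from $\nabla\phi$, and this is where the quantitative hypothesis $R \ge a^2$ outside $Z$ is crucially exploited.
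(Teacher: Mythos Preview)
Your $K$-theoretic reduction to Lemma~\ref{L2} is correct and is exactly what the paper does (it is the paragraph immediately preceding Theorem~\ref{LCA}).  The divergence is entirely in how Lemma~\ref{L2} is proved.

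The paper does \emph{not} use Agmon decay.  Instead it compares $D$ with an auxiliary operator on the positive-curvature region.  Concretely, let $E$ be the Friedrichs extension of $D^2$ with domain $C^\infty_c(U_1)$, where $U_1=\{d(\cdot,Z)>r\}$; then $E\ge a^2$ by the Weitzenbock formula.  Finite propagation for the wave equation gives $\cos(tD)s=\cos(t\sqrt{E})\,s$ for $|t|\le r$ and $s$ supported in $\{d(\cdot,Z)>2r\}$, so by the Fourier cosine representation one gets, for even $f$ with $\widehat f$ supported in $(-r,r)$ and $\phi$ supported away from a $2r$-neighborhood of $Z$,
\[
\|f(D)\rho(\phi)\|=\|f(\sqrt{E})\rho(\phi)\|\le\|\phi\|\sup_{|\lambda|\ge a}|f(\lambda)|.
\]
Odd $f$ are handled by passing to $|f|^2$.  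A three-term cutoff decomposition then finishes Lemma~\ref{L2}.  The point is that the spectral lower bound is applied to $E$, where it is available by construction, rather than to $D^2$.

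Your Agmon route is a reasonable alternative in spirit, but there is a gap at the crucial step.  The integration-by-parts identity gives
\[
\langle R\,\phi u,\phi u\rangle\le \operatorname{Re}\langle D^2u,\phi^2u\rangle+\||\nabla\phi|\,u\|^2,
\]
and to close the loop you need $\operatorname{Re}\langle D^2u,\phi^2u\rangle\le(a^2-\delta)\|\phi u\|^2$.  For an eigenfunction this is immediate, but for a general $u$ in the range of $P_{[0,a^2-\delta]}(D^2)$ the hypothesis $\langle D^2u,u\rangle\le(a^2-\delta)\|u\|^2$ does \emph{not} imply it: the compressed operators $PD^2P$ and $P\phi^2P$ need not commute, so one cannot pass from the unweighted spectral bound to the weighted one.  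This is precisely why the literature on decay of spectral projections typically routes through Combes--Thomas resolvent estimates plus a Helffer--Sj\"ostrand representation of $g(D^2)$, which is substantially more machinery than you indicate.  The cross-term absorption you flag is a secondary issue; the real obstacle is this weighted-versus-unweighted mismatch.  The paper's Friedrichs-extension comparison sidesteps the problem entirely, since for $E$ the bound $E\ge a^2$ holds as an operator inequality and one never needs to localize a spectral projection of $D^2$.
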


The rest of this section will give the proof of Lemma~\ref{L2}.  In order to use the finite propagation speed method, we consider first the properties of functions $f$ that have compactly supported Fourier transforms.

\begin{lemma}\label{L3a} With notation as in Lemma~\ref{L2}, suppose that $f\in {\mathcal S}(\R)$ is an even function and has Fourier transform $\hat{f}$ supported in $(-r,r)$.   Let $\phi\in C_0(X)$ have support disjoint from a $2r$-neighborhood of $Z$.  Then
\[ \|f(D) \rho(\phi)\| \le \|\phi\|\sup\{ |f(\lambda)| : |\lambda|\ge a \} \]
and the same estimate applies to $\rho(\phi)f(D) $. \end{lemma}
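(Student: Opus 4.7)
The strategy is to establish the norm bound by showing that the spectral measure $\mu_v$ of $D$ at the vector $v := \rho(\phi) u$ is concentrated on $\{|\lambda|\ge a\}$. Setting $M := \sup\{|f(\lambda)|:|\lambda|\ge a\}$, this yields
\[
\|f(D) v\|^2 = \int_\R |f|^2\, d\mu_v \le M^2\|v\|^2,
\]
and combined with $\|v\|\le\|\phi\|\,\|u\|$ this proves the stated estimate on $f(D)\rho(\phi)$; the companion bound on $\rho(\phi) f(D)$ follows by taking adjoints, since $\overline f$ satisfies the same hypotheses as $f$.

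First, I would approximate $v$ in $L^2$ by smooth compactly supported sections $v_n$ whose supports remain at positive distance from $Z$ (the hypothesis $d(\Supp\phi, Z) > 2r$ leaves ample room for a small mollification not to push support across $Z$). For any real polynomial $P$ and such a $v_n$, the section $w := P(D) v_n$ is smooth and, by locality of $D$, compactly supported inside $\Supp v_n$, hence in the open set where $R\ge a^2 I$. The Weitzenbock formula applied to $w$ then gives
\[
\|Dw\|^2 = \|\nabla w\|^2 + \langle R w, w\rangle \ge a^2\|w\|^2,
\]
which spectrally translates into
\[
\int_\R (\lambda^2 - a^2)\, P(\lambda)^2\, d\mu_{v_n}(\lambda) \ge 0.
\]
Invoking the classical fact that every nonnegative real polynomial on $\R$ is a sum of two squares of polynomials, the inequality extends to $\int(\lambda^2 - a^2)\, Q\, d\mu_{v_n} \ge 0$ for every nonnegative polynomial $Q$.

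It remains to deduce $\mu_{v_n}((-a,a)) = 0$. Because $v_n \in C_c^\infty$, the measure $\mu_{v_n}$ has finite moments of all orders, and I expect that a truncation argument (or a direct appeal to Haviland's theorem from the Hamburger moment problem, under the usual Carleman condition) allows the polynomial inequality to be promoted to every nonnegative continuous compactly supported test function. Choosing such a function to be a bump in $(-a+\epsilon,a-\epsilon)$ makes the integrand $(\lambda^2 - a^2)\,Q$ pointwise strictly negative on its support, so the inequality forces $\int Q\, d\mu_{v_n} = 0$; letting $\epsilon \to 0$ gives $\mu_{v_n}((-a,a))=0$, and passing to the limit $v_n \to v$ completes the argument. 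I expect this density/moment-problem step to be the main technical obstacle, though it is more tedious than conceptually deep. The hypothesis that $\hat f$ has compact support does not appear essentially in this plan, which suggests that the author may have a more direct finite-propagation argument in mind (for instance, applying the Weitzenbock inequality to $\chi(D) v_n$ for Paley--Wiener $\chi$ of band $(-r,r)$, which produces the analogous inequality $\int(\lambda^2-a^2)|\chi|^2\, d\mu_{v_n}\ge 0$ and replaces the polynomial density question with a band-limited one); either route leads to the same spectral conclusion.
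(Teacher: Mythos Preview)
Your central claim---that the spectral measure $\mu_v$ of $D$ at $v=\rho(\phi)u$ (or at the approximants $v_n$) is supported in $\{|\lambda|\ge a\}$---is strictly stronger than the lemma, and it is \emph{false} in general. Consider a compact manifold on which $D$ has an eigenvalue $\lambda_0\in(-a,a)$; this is perfectly compatible with $R\ge a^2 I$ outside a proper subset $Z$. The eigenfunction $\psi$ cannot vanish identically on the open set $M\setminus Z$ (unique continuation for Dirac operators), so there exist $v_n\in C_c^\infty(M\setminus Z)$ with $\langle v_n,\psi\rangle\neq 0$, and then $\mu_{v_n}(\{\lambda_0\})=|\langle v_n,\psi\rangle|^2>0$. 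Your polynomial inequality $\int(\lambda^2-a^2)P(\lambda)^2\,d\mu_{v_n}\ge 0$ is perfectly correct (the Weitzenbock step is sound, since $P(D)v_n$ really is supported in $\Supp v_n$), but this example shows that the moment-problem passage from polynomials to bump functions \emph{cannot} be justified: the signed measure $(\lambda^2-a^2)\,d\mu_{v_n}$ has nonnegative integral against every $P^2$ yet is genuinely negative near $\lambda_0$. No amount of Carleman or Haviland machinery will close this gap, because the conclusion you are aiming for is not true.

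The hypothesis that $\hat f$ is supported in $(-r,r)$, which you noted plays no role in your plan, is in fact the whole point. The paper's argument writes $f(D)=\pi^{-1}\int_0^r\hat f(t)\cos(tD)\,dt$ and introduces an auxiliary operator: the Friedrichs extension $E$ of $D^2\restriction C_c^\infty(U_1)$ on $L^2(U_1;S)$, where $U_1=\{d(\cdot,Z)>r\}$. This $E$ has spectrum in $[a^2,\infty)$ by exactly your Weitzenbock computation. Finite propagation speed guarantees that for $s$ supported in $U_2=\{d(\cdot,Z)>2r\}$ and $0\le t\le r$ one has $\cos(tD)s=\cos(t\sqrt{E})s$ (the wave stays inside $U_1$, where the two operators agree). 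Hence $f(D)\rho(\phi)=f(\sqrt{E})\rho(\phi)$, and the spectral bound on $E$ gives the estimate. The $2r$-separation of $\Supp\phi$ from $Z$ is used exactly here, to give the wave room to propagate for time $r$ without reaching $Z$. Your parenthetical suggestion at the end (replace $P$ by a band-limited $\chi$) is heading in this direction, but the comparison must be made with a \emph{different} self-adjoint operator living on the subregion, not with the spectral measure of $D$ itself.
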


\begin{proof}  We   use the Fourier cosine formula
\[ f(D) = \frac{1}{\pi}\int_0^r \hat{f}(t) \cos(tD)\, dt, \]
remembering that $\hat{f}(t)$ vanishes for $t>r$.  Now let $U_n= \{x\in X: d(x,Z)>nr\}$, for $n=1,2$, and consider the unbounded, symmetric operator which is equal to $D^2$ with domain $C^\infty_c(U_1)$.  This operator is bounded below by $a^2I$ and therefore it has a Friedrichs extension on the Hilbert space $L^2(U_1;S)$ which is also bounded below (with the same bound) and which we shall denote by $E$.

A standard finite propagation speed argument shows that if $s$ is smooth and compactly supported in $U_2$ then
\[ \cos(tD)s = \cos(t\sqrt{E})s,\quad \mbox{for $0\le t\le r$}.\]
In particular, $\cos(tD)\rho(\phi) = \cos(t\sqrt{E})\rho(\phi)$ for these values of $t$.  Via the Fourier integral above, this implies that $f(D)M_\phi = f(\sqrt{E})M_\phi$.
But since the spectrum of $\sqrt{E}$ is bounded below by $a$,
\[ |f(\sqrt{E})| \le \sup\{ |f(\lambda)| : |\lambda|\ge a \}, \]
and this gives the desired estimate.
\end{proof}

There is a version of Lemma~\ref{L3a} without the evenness hypothesis.

\begin{lemma}\label{L3} With notation as above, suppose that $f\in {\mathcal S}(\R)$   has Fourier transform $\hat{f}$ supported in $(-r,r)$.   Let $\phi\in C_0(X)$ have support disjoint from a $4r$-neighborhood of $Z$.  Then
\[ \|f(D) \rho(\phi)\| \le 2\|\phi\|\sup\{ |f(\lambda)| : |\lambda|\ge a \} \]
and the same estimate applies to $\rho(\phi)f(D) $. \end{lemma}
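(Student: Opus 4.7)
My plan is to reduce Lemma~\ref{L3} to Lemma~\ref{L3a} by splitting $f$ into its even and odd parts and then squaring the odd part so that it becomes even.

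\smallskip
\textbf{Step 1: Decompose.} Write $f=f_e+f_o$ with $f_e(\lambda)=\tfrac12(f(\lambda)+f(-\lambda))$ and $f_o(\lambda)=\tfrac12(f(\lambda)-f(-\lambda))$. Both $f_e$ and $f_o$ lie in $\mathcal S(\R)$. Since $\widehat{f(-\cdot)}(t)=\hat f(-t)$, the Fourier transforms $\hat f_e$ and $\hat f_o$ are again supported in $(-r,r)$. For $|\lambda|\ge a$ we also have $|-\lambda|\ge a$, so $|f_e(\lambda)|,|f_o(\lambda)|\le \sup\{|f(\mu)|:|\mu|\ge a\}$.

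\smallskip
\textbf{Step 2: Handle the even part with Lemma~\ref{L3a}.} Since $f_e$ is even with $\hat f_e$ supported in $(-r,r)$ and $\Supp\phi$ is in particular disjoint from a $2r$-neighborhood of $Z$, Lemma~\ref{L3a} gives
\[ \|f_e(D)\rho(\phi)\|\le \|\phi\|\sup\{|f(\lambda)|:|\lambda|\ge a\}, \]
together with the symmetric bound on $\rho(\phi)f_e(D)$.

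\smallskip
\textbf{Step 3: Reduce the odd part to the even case by squaring.} The function $g(\lambda):=|f_o(\lambda)|^2=f_o(\lambda)\overline{f_o(\lambda)}$ is even (because $|f_o|$ is even when $f_o$ is odd), lies in $\mathcal S(\R)$, and has Fourier transform $\hat f_o \ast \widehat{\overline{f_o}}$ supported in $(-2r,2r)$. Since $\Supp\phi$ is disjoint from a $4r=2\cdot 2r$-neighborhood of $Z$, Lemma~\ref{L3a} applies to $g$ with radius parameter $2r$ and yields
\[ \|\rho(\phi)g(D)\rho(\phi)\|\le \|\phi\|^2 \sup\{|g(\lambda)|:|\lambda|\ge a\}\le \|\phi\|^2\bigl(\sup\{|f(\lambda)|:|\lambda|\ge a\}\bigr)^2. \]
On the other hand
\[ \|f_o(D)\rho(\phi)\|^2 = \|\rho(\phi)f_o(D)^*f_o(D)\rho(\phi)\| = \|\rho(\phi)g(D)\rho(\phi)\|, \]
so $\|f_o(D)\rho(\phi)\|\le \|\phi\|\sup\{|f(\lambda)|:|\lambda|\ge a\}$. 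The bound for $\rho(\phi)f_o(D)$ is obtained identically, using $f_o(D)f_o(D)^*$ and noting that $f_o\overline{f_o}=\overline{f_o}f_o=g$.

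\smallskip
\textbf{Step 4: Add.} The triangle inequality then gives
\[ \|f(D)\rho(\phi)\|\le \|f_e(D)\rho(\phi)\|+\|f_o(D)\rho(\phi)\|\le 2\|\phi\|\sup\{|f(\lambda)|:|\lambda|\ge a\}, \]
and the symmetric estimate on $\rho(\phi)f(D)$ follows in the same way.

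\smallskip
I expect no real obstacle. The only point that requires a moment's thought is the Fourier-support accounting: convolving $\hat f_o$ with $\widehat{\overline{f_o}}$ doubles the support radius from $r$ to $2r$, which is precisely what forces the $4r$-neighborhood hypothesis and the factor $2$ in the final constant. This is consistent with the statement of the lemma, and explains why neither the radius nor the constant can be improved by this method.
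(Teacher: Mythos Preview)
Your proposal is correct and follows essentially the same route as the paper's own proof: decompose $f$ into even and odd parts, apply Lemma~\ref{L3a} directly to the even part, and reduce the odd part to the even case by passing to $g=|f_o|^2$ via the $C^*$-identity $\|T\|^2=\|T^*T\|$.  The only slip is that in Step~3 the adjoint of $\rho(\phi)$ is $\rho(\bar\phi)$, so the sandwiched expression should read $\|\rho(\bar\phi)\,g(D)\,\rho(\phi)\|$; since $\bar\phi$ has the same support and norm as $\phi$, this does not affect the estimate.
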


\begin{proof} If $f$ is even, this is a consequence of Lemma~\ref{L3a}.  If $f$ is odd, use the $C^*$-identity to write
\[ \|f(D)\rho(\phi)\|^2 \le \|\rho(\bar{\phi})\| \| |f|^2(D) \rho(\phi) \|. \]
The function $g=|f|^2$ is even, belongs to ${\mathcal S}(R)$ and has Fourier transform supported in $(-2r,2r)$.  Thus, applying Lemma~\ref{L3a} to the function $g$,
\[ \| |f|^2(D) \rho(\phi) \| \le \|\phi\|\sup\{ |f(\lambda)|^2 : |\lambda|\ge a \} \]
and so we obtain (on taking the square root)
\[ \|f(D) \rho(\phi)\| \le \|\phi\|\sup\{ |f(\lambda)| : |\lambda|\ge a \}, \]
which gives the desired result for odd $f$.  The general result is obtained by writing $f$ as a sum of even and odd components (this decomposition accounts for the extra factor of 2 in the statement of Lemma~\ref{L3}). \end{proof}

Using this, let us complete the proof of Lemma~\ref{L2}.   Let $f$ be as in that lemma, and let $\epsilon>0$ be given.  There exists a smooth function $g$ with compactly supported Fourier transform such that $\sup \{|g(\lambda)-f(\lambda)|: \lambda\in\R\} < \epsilon$.  In particular, $|g(\lambda)|<\epsilon$ for $|\lambda|>a$.  Let $r$ be such that $\Supp(\hat{g})\subseteq (-r,r)$ and let $\psi\colon X\to [0,1]$ be a continuous function equal to 1 on a $4r$-neighborhood of $Z$ and vanishing off a $5r$-neighborhood of $Z$.  Write
 \begin{multline*}
f(D) =  \rho(\psi)g(D)\rho(\psi) +\\
 + \rho(1-\psi)g(D)\rho(\psi) + g(D)\rho(1-\psi) + (f(D)-g(D)).
\end{multline*}
The first term is a locally compact operator supported near $Z$, the second and third terms have norm bounded by $2\epsilon$ by lemma~\ref{L3}, and the fourth term has norm bounded by $\epsilon$ by the spectral theorem.  Thus, $f(D)$ lies within $5\epsilon$ of a locally compact operator supported near $Z$.  Since $\epsilon$ is arbitrary, $f(D)\in C^*(Z\subseteq X)$, as was to be shown.

\section{Vanishing results}
As a consequence of the discussion above, if the curvature operator $R$ is uniformly positive outside $Z$, and if the $K$-theory map $K_*(C^*(Z))\to K_*(C^*(X))$ is zero, then the index $\Index(D)\in K_*(C^*(X))$ must vanish.  The usual vanishing theorem establishes this result when $Z=\emptyset$ (i.e., when we have uniformly positive curvature on the whole of $M$), so we can regard these sort of results as a generalization where one allows a ``small amount'' of non-positive curvature.

For example, we have

\begin{proposition} Let $M$ be a complete connected noncompact Riemannian manifold, and let $D$ be a Dirac-type operator whose associated curvature $R$ is uniformly positive outside a compact set.  Then $\Index(D)=0$. \end{proposition}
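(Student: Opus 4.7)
The plan is to apply Theorem~\ref{LCA} with $Z$ replaced by a larger subset whose coarse $K$-theory vanishes.  The hypotheses of noncompactness and connectedness of $M$ will be used exactly once, to construct a ray escaping to infinity.

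First, if $Z=\emptyset$ the classical Weitzenb\"ock vanishing applies and there is nothing to prove, so assume $Z\ne\emptyset$ and pick $p\in Z$.  Because $M$ is complete, connected, and noncompact, Hopf--Rinow guarantees that minimizing geodesics exist from $p$ to points which escape every compactum; an Arzel\`a--Ascoli extraction of their initial directions (in the unit sphere of $T_pM$) produces a minimizing geodesic ray $\gamma\colon[0,\infty)\to M$ with $\gamma(0)=p$.  Since $\gamma$ is minimizing, its image, equipped with the restricted metric from $M$, is isometric to $[0,\infty)$.

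Put $Z'=Z\cup\gamma([0,\infty))$.  Since $Z'\supseteq Z$, the curvature $R_D$ is still uniformly positive outside $Z'$, so Theorem~\ref{LCA} applies to $Z'$ and gives a localized index in $K_j(C^*(Z'\subseteq M))$ mapping forward to $\Index(D)\in K_j(C^*(M))$.  It is therefore enough to show that $K_j(C^*(Z'\subseteq M))=0$.  Using the isomorphism $K_*(C^*(Z'\subseteq M))\cong K_*(C^*(Z'))$ recalled in \S2, I would replace $Z'$ by itself as a proper metric space.  Since $Z$ is bounded and contains $p=\gamma(0)$, the inclusion $\gamma([0,\infty))\hookrightarrow Z'$ is a coarse equivalence, giving $K_*(C^*(Z'))\cong K_*(C^*([0,\infty)))$.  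The half-line $[0,\infty)$ is coarsely flasque --- the shift $t\mapsto t+1$ yields the standard Eilenberg swindle on $C^*([0,\infty))$ --- so $K_*(C^*([0,\infty)))=0$, and $\Index(D)=0$ follows.

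No step is really an obstacle; the only soft point is the existence of a minimizing ray, which is a routine consequence of completeness and noncompactness.  The conceptual content is simply that in a noncompact manifold one can always enlarge the exceptional set $Z$ to a coarsely flasque set without losing uniform positivity of $R_D$ at infinity, at which point Theorem~\ref{LCA} forces the coarse index to vanish.
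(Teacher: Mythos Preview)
Your argument is correct and follows essentially the same route as the paper: enlarge the compact exceptional set by adjoining a geodesic ray to infinity, observe that the result is coarsely equivalent to $\R^+$, and use $K_*(C^*(\R^+))=0$ together with Theorem~\ref{LCA}. You have simply filled in the details (existence of the ray via Hopf--Rinow, the coarse equivalence, the Eilenberg swindle) that the paper leaves implicit.
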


\begin{proof} Let $K$ be a compact set outside which the curvature is uniformly positive, and let $Z$ be the union of $K$ and a geodesic ray from one of its points to infinity.  The index of $D$ then lies in the image of $K_*(C^*(Z))\to K_*(C^*(X))$ by the discussion above.  But $Z$ is coarsely equivalent to $\R^+$, so $K_*(C^*(Z))=0$. \end{proof}

For another example, imagine that we are in the situation of the ``partitioned manifold index theorem'' of ~\cite{JR6}.  So, let $M$ be a non-compact manifold that is partitioned by a compact hypersurface $N$, which (say) is spin and of non zero $\Ahat$-genus, into two pieces $M^+$ and $M^-$.

\begin{proposition} A partitioned manifold as described above admits no complete metric that has uniformly positive scalar curvature on just one of the partition components ($M^+$ or $M^-$). \end{proposition}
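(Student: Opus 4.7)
I plan to argue by contradiction, combining Theorem~\ref{LCA} with the partitioned manifold index theorem of~\cite{JR6}. Assume without loss of generality that $g$ is a complete metric on $M$ whose scalar curvature is uniformly positive on $M^+$, and let $D$ be the spin Dirac operator on $M$ (using a spin structure restricting to the given one on $N$, as is standard in this setting). By the Lichnerowicz formula, the curvature endomorphism $R_D=\kappa/4$ of $D$ is uniformly positive off $Z:=M^-$.

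The first step is to apply Theorem~\ref{LCA} to produce a localized index $\Index_Z(D)\in K_j(C^*(M^-\subseteq M))$ that maps, under the inclusion-induced map $i_*$, to the usual coarse index $\Index(D)\in K_j(C^*(M))$. The second step is to invoke the partitioned manifold index theorem: the compact hypersurface $N$ supplies a natural homomorphism
\[
\partial\colon K_j(C^*(M)) \to K_{j-1}(C^*(N)) \cong \Z,
\]
which sends $\Index(D)$ to $\Index(D_N)=\Ahat(N)$; this is nonzero by hypothesis.

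The third and decisive step is to check that $\partial\circ i_*=0$. The most conceptual route is to identify $\partial$ with the boundary map in the coarse Mayer--Vietoris sequence~\cite{HRY} for the cover $M=M^+\cup M^-$ with overlap $N$,
\[
\cdots \to K_j(C^*(M^+\subseteq M))\oplus K_j(C^*(M^-\subseteq M)) \to K_j(C^*(M)) \xrightarrow{\partial} K_{j-1}(C^*(N)) \to \cdots,
\]
whereupon exactness forces $\partial\circ i_*=0$ and yields the contradiction $0\neq\Ahat(N)=\partial\,i_*\,\Index_Z(D)=0$. The main obstacle is precisely this last step: one must verify that the pair $(M^+,M^-)$ is coarsely excisive (which is routine given a tubular neighborhood of $N$ of controlled geometry), and identify the partitioned manifold pairing of~\cite{JR6} with the Mayer--Vietoris boundary. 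If the Mayer--Vietoris machinery is felt to be too heavy, an alternative is to approximate $\Index_Z(D)$ by finite-propagation operators supported in a neighborhood of $M^-$ that misses a preassigned tubular neighborhood of $N$ inside $M^+$, and to check by a finite propagation speed argument that the partitioned index pairing vanishes on such operators.
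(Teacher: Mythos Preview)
Your argument is correct, but the paper takes a more direct route that sidesteps precisely the ``main obstacle'' you identify. Instead of invoking coarse Mayer--Vietoris and then having to identify the partitioned-manifold pairing with the MV boundary map, the paper uses the signed distance to $N$ to build a proper coarse map $g\colon M\to\R$ carrying $M^\pm$ to $\R^\pm$, so that the partitioned index is simply $g_*(\Index D)\in K_1(C^*(\R))\cong\Z$. Functoriality of the localized index then gives a commuting square
\[
\xymatrix{K_1(C^*(M^-\subseteq M))\ar[r]^-{i_*}\ar[d]^{g_*}&K_1(C^*(M))\ar[d]^{g_*}\\ K_1(C^*(\R^-\subseteq\R))\ar[r]&K_1(C^*(\R)),}
\]
and the bottom-left group vanishes because $\R^-$ is coarsely a ray. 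This needs only functoriality of $C^*(Z\subseteq X)$ under coarse maps and the $K$-theory of a half-line---no excisiveness check, no identification of boundary maps. Your Mayer--Vietoris approach buys a more structural statement (the vanishing of $\partial\circ i_*$ holds for any coarsely excisive decomposition, not just one modeled on $\R$), at the cost of the extra verifications you flag.
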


\begin{proof} Suppose $M$ has such a metric.  Using the distance from $N$, construct a proper, coarse map $g\colon M\to\R$ that induces the given partition.  By definition, the partitioned manifold index is
\[ g_*(\Index D) \in K_1(C^*(|\R|))=\Z, \]
and the index theorem of~\cite{JR6} equates this to the $\Ahat$-genus of $N$. Now suppose that $M$ has positive scalar curvature over $M^+$.   Then by our main result, the coarse index factors through $K_1(C^*(M^-\subseteq M))$.  But considering the commutative diagram
\[\xymatrix{ K_1(C^*(M^-\subseteq M))\ar[r]\ar[d]^{g_*} & K_1(C^*(M))\ar[d]^{g_*}\\
K_1(C^*(\R^-\subseteq \R))\ar[r]  & K_1(C^*(\R)) }\]
and noting that the bottom left-hand group is zero, we see that the coarse index vanishes. \end{proof}

\section{The relative index theorem}

The key technical result of~\cite[Chapter 4]{GrLa} is a relative index theorem which may be expressed as follows.

Suppose that $M_1$ and $M_2$ are complete Riemannian manifolds equipped with generalized Dirac operators $D_1$ and $D_2$ respectively, acting on (graded) Clifford bundles $S_1$ and $S_2$.  Suppose further that these items \emph{agree near infinity}: in other words, that there exist compact sets $Z_i\subseteq M_i$ an isometry $h\colon M_1\setminus Z_1\to M_2\setminus Z_2$ that is covered by a bundle isomorphism from $S_1$ to $S_2$, and that this isomorphism conjugates $D_1$ to $D_2$.

In these circumstances once can define a \emph{relative topological index} $\Index_r(D_1,D_2)\in\Z$.  There are several ways to define this quantity. For instance, one can compactify each of the $M_i$ identically outside $Z_i$ (thus obtaining \emph{compact} manifolds $\widetilde{M}_i$ with elliptic operators $\widetilde{D}_i$) and then take the difference of the ordinary Fredholm indices, $\Index(\widetilde{D}_1) - \Index(\widetilde{D}_2)$, to define the relative index.    Alternatively, one can take the Chern-Weil forms ${\mathfrak a}_i$ that are the representatives of the indices of $D_i$ according to the local index theorem, and ``integrate their difference'' over $M_1\cup M_2$: specifically, note that $h^*$ takes ${\mathfrak a}_2$ to ${\mathfrak a}_1$, so that if we let $\mathfrak{a}$ be any smooth form on $M_2$, supported outside $Z_2$ and agreeing with ${\mathfrak a}_2$ near infinity, then the difference
\[ \int_{M_1} ({\mathfrak a}_1 - h^*{\mathfrak a}) - \int_{M_2} ({\mathfrak a}_2-{\mathfrak a}) \]
is well-defined (the integrands are compactly supported) and independent of the choice of $\mathfrak a$, and may be taken as the definition of the ``integral of the difference of Chern-Weil forms''.  The equality of these two definitions of relative index is essentially Proposition 4.6 of~\cite{GrLa}: it shows both that the first definition is independent of the choice of compactification, and that the second definition yields an integer.

\begin{remark}\label{itslocal} Either definition implies that the relative index $\Index_r(D_1,D_2)$ depends only on the geometry of $M_1$ and $M_2$ (and the associated operators) in a neighborhood of the ``regions of disagreement'' $Z_1$ and $Z_2$.  This stability property of the relative index is the basis for several calculations in~\cite{GrLa}. \end{remark}

Now suppose further that $D_1$ and $D_2$ have uniformly positive Weitzenbock curvature operators at infinity.  Then $D_1$ and $D_2$, individually, are Fredholm operators, by Theorem~3.2 of~\cite{GrLa} (a special case of our Theorem~\ref{LCA}). The relative index theorem then states

\begin{proposition} \cite[Theorem 4.18]{GrLa} In the circumstances described above one has
\[ \Index(D_1) - \Index(D_2) = \Index_r(D_1,D_2). \]
\end{proposition}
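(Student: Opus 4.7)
The plan is to reduce the theorem to Theorem~\ref{LCA} by means of a formal-difference construction, and then to identify two localized coarse indices via a locality/excision argument.

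First, form the disjoint union $M = M_1 \sqcup M_2^{\mathrm{op}}$ (with the grading of $S_2$ reversed) and the associated Dirac operator $D = D_1 \oplus D_2^{\mathrm{op}}$, so that $\Index(D) = \Index(D_1) - \Index(D_2) \in K_j(C^*(M))$. The Weitzenbock curvature of $D$ is uniformly positive outside the compact set $Z := Z_1 \sqcup Z_2$, so Theorem~\ref{LCA} supplies a localized index $\Index_Z(D) \in K_j(C^*(Z\subseteq M))$ mapping to $\Index(D)$; because $Z$ is compact, $K_j(C^*(Z\subseteq M)) \cong K_j(\Compacts)$, which equals $\Z$ when $j=0$ (the odd case being trivial). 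Carrying out the analogous construction on the compactifications of the excerpt gives a closed manifold $\widetilde M = \widetilde M_1 \sqcup \widetilde M_2^{\mathrm{op}}$ with Dirac operator $\widetilde D$, whose ordinary Fredholm index is $\Index_r(D_1,D_2)$ by the first definition of the relative index, and which likewise coincides with $\Index_Z(\widetilde D) \in K_j(C^*(Z\subseteq \widetilde M)) \cong \Z$.

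It remains to establish $\Index_Z(D) = \Index_Z(\widetilde D)$ as integers. I would choose an open neighborhood $U$ of $Z$ small enough that $M$ and $\widetilde M$ contain isometric copies of $U$ carrying the same Dirac operator---possible because the compactification only alters each $M_i$ far from $Z_i$---and a normalizing function $\chi$ with $\chi^2-1$ supported in $(-a,a)$. A finite-propagation-speed argument exactly parallel to the proof of Lemma~\ref{L3a} then shows that for $\phi\in C_c(U)$ supported well inside $U$, the operators $\chi(D)\rho(\phi)$ and $\chi(\widetilde D)\rho(\phi)$ coincide under the identification of $U$ in $M$ and in $\widetilde M$. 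Combined with Lemma~\ref{L2} applied to a cutoff supported outside $U$---which pushes the ``far'' parts of $\chi(D)$ and $\chi(\widetilde D)$ into $C^*(Z\subseteq M)$ and $C^*(Z\subseteq \widetilde M)$ respectively---this should exhibit $[\chi(D)]$ and $[\chi(\widetilde D)]$ as images of a single common class under natural $*$-homomorphisms from a ``local'' quotient algebra. Naturality of the $K$-theory boundary map then yields the desired equality.

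The principal obstacle is the last step. Concretely, one must construct a $C^*$-algebra modeling $\chi$ of ``the restriction of the Dirac operator to $U$'' modulo suitable ideals, together with canonical $*$-homomorphisms into the global quotients $D^*(M)/C^*(Z\subseteq M)$ and $D^*(\widetilde M)/C^*(Z\subseteq \widetilde M)$ carrying this local class to $[\chi(D)]$ and $[\chi(\widetilde D)]$ respectively. Once this localized framework is in place, the rest of the proof is a routine diagram chase on the six-term exact sequences in $K$-theory.
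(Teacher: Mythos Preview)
Your strategy is sound in outline but takes a route different from the paper's. The paper does not prove the Gromov--Lawson proposition directly; it is cited, and then a generalization is proved (with the classical case recovered by taking $X=\R^+$, $Z=\{0\}$). That generalized proof avoids compactification entirely. Instead, the paper introduces a \emph{pull-back} $C^*$-algebra
\[
A = \{(T_1,T_2)\in C^*(M_1)\oplus C^*(M_2) : \Phi(\pi_1(T_1)) = \pi_2(T_2)\},
\]
where $\Phi$ is the isomorphism of quotients $C^*(M_i)/C^*(Z_i\subseteq M_i)$ induced by $h$, and \emph{defines} the relative index as a component of $\Index_Z(D)\in K_j(A)$ under a split short exact sequence. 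Under the positive-curvature hypothesis, Lemma~\ref{L2} forces $f(D)$ into the smaller ideal $J = C^*(Z_1\subseteq M_1)\oplus C^*(Z_2\subseteq M_2)\subseteq A$, and the proof reduces to a two-line computation with the splitting $a\mapsto (a,a)$ on $J$.

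The practical difference is this: the locality/excision step you flag as the ``principal obstacle'' is precisely what the pull-back algebra absorbs. By working inside $A$ from the start, the paper never has to compare two different ambient manifolds $M$ and $\widetilde M$; the isomorphism $\Phi$ already encodes the agreement away from $Z$, and Lemmas~\ref{rel1} and~\ref{rel2} (proved by the same finite-propagation arguments you invoke) place $\chi(D)$ and $f(D)$ in the right algebras automatically. Your approach, by contrast, stays closer to the original Gromov--Lawson definition via compactification, which has the virtue of making the link to the classical relative index transparent---but the price is exactly the local-model $C^*$-algebra you describe, whose construction is not routine. The paper's later Proposition on locality of the relative index (deferred to~\cite{JRPS33}) is essentially the machinery you would need; so your obstacle is real, and the paper's answer to it is to change the definition rather than to build that local model by hand.
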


We are going to generalize this result by allowing the ``regions of disagreement'' $Z_i$ to be non-compact.  The first thing that we need to do is to \emph{define} the relative index in this case.  The following discussion, which is based on the ideas of~\cite{JR12}, leads up to the generalized definition of the relative index, Definition~\ref{relinddef}.

    Let $M_1$ and $M_2$ be complete Riemannian manifolds (as above) and let $D_1$ and $D_2$ be generalized Dirac operators.  Suppose that  $M_1$ and $M_2$ are equipped with coarse maps $q_1$ and $q_2$ to a \emph{control space}
 $X$ (a proper metric space), and that  $Z$ is a subset of $X$.  Put $Z_i = \overline{q_i^{-1}(Z)}\subseteq M_i$ for $i=1,2$.  Suppose that there is a diffeomorphism $h\colon M_1\setminus Z_1 \to M_2\setminus Z_2$ which is covered by an isomorphism of Clifford bundles and Dirac operators and which is \emph{compatible with the control maps} in the sense that $q_1 =q_2\circ f$.

From these data one can  define a relative index in $K_j(C^*(Z))$.  Let $H_i$ be the Hilbert space $L^2(M_i; S_i)$ and regard each $H_i$ as an $X$-module via the control map $q_i$.  In this way we obtain translation algebras $C^*(X;H_i)$, $i=1,2$, each of which contains an ideal $C^*(Z\subseteq X;H_i)$ corresponding to $Z$.  The isometry $h$ between the $M_i$ outside $Z_i$ passes to an unitary isomorphism $V$ between the $L^2(M_i\setminus Z_i; S_i)$, and it is easy to see that conjugation by this unitary induces an isomorphism of quotient $C^*$-algebras
\[ \Phi \colon C^*(M_1;H_1)/C^*(Z_1\subseteq M_1;H_1) \to C^*(M_2;H_2)/C^*(Z_2\subseteq M_2; H_2). \]

\begin{lemma}\label{rel1} Let notation be as above and let $f\in C_0(\R)$.   Then
\[ \Phi[ f(D_1) ] = [f(D_2)] , \]
in the quotient algebra $C^*(M_2;H_2)/C^*(Z_2\subseteq M_2; H_2)$. \end{lemma}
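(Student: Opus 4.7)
The approach is to reduce to functions whose Fourier transform is compactly supported and then exploit finite propagation speed to match $f(D_1)$ and $f(D_2)$ modulo operators supported near $Z$. Since the Schwartz functions with compactly supported Fourier transform are norm-dense in $C_0(\R)$, and both sides of the claimed equality depend norm-continuously on $f$ via the functional calculus (together with Lemma~\ref{L1}), it suffices to treat such $f$. Fix $r>0$ with $\Supp(\hat f)\subseteq(-r,r)$; then the Fourier representation of $f(D_i)$ combined with finite propagation speed for the wave operators $e^{itD_i}$ shows that $f(D_i)$ has propagation at most $r$.

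Next, choose a cutoff $\psi_2\colon M_2\to[0,1]$ equal to $1$ on a $2r$-neighborhood of $Z_2$ and vanishing outside a $3r$-neighborhood, and define a matching cutoff $\psi_1\colon M_1\to[0,1]$ by $\psi_1=\psi_2\circ h$ on $M_1\setminus Z_1$, extended by $1$ on $Z_1$ (shrinking the inner neighborhood if needed to make the extension continuous). With this choice $V\rho_1(\psi_1)=\rho_2(\psi_2)V$ because $V$ intertwines multiplication operators by $h$-related functions. The key identity is
\[
V\,f(D_1)\,\rho_1(1-\psi_1)\;=\;f(D_2)\,\rho_2(1-\psi_2)\,V.
\]
To see this, observe that any $u$ in the range of $\rho_1(1-\psi_1)$ has support at distance at least $2r$ from $Z_1$, so by finite propagation speed $e^{itD_1}u$ is supported at distance at least $r$ from $Z_1$ for $|t|\le r$. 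On this region $h$ intertwines $D_1$ with $D_2$, so uniqueness of solutions of the wave equation gives $Ve^{itD_1}u=e^{itD_2}Vu$; integrating against $\hat f$ yields the identity.

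To finish, note that $f(D_i)\rho_i(\psi_i)$ and $\rho_i(\psi_i)f(D_i)$ both lie in $C^*(Z_i\subseteq M_i;H_i)$: indeed, $f(D_i)\in C^*(M_i;H_i)$ by Lemma~\ref{L1}, $\rho_i(\psi_i)$ is multiplication by a function supported in a $3r$-neighborhood of $Z_i$, and $f(D_i)$ has propagation at most $r$. Splitting $f(D_i)=f(D_i)\rho_i(1-\psi_i)+f(D_i)\rho_i(\psi_i)$ and applying the intertwining identity shows that $Vf(D_1)V^*$ and $f(D_2)$ differ by an element of $C^*(Z_2\subseteq M_2;H_2)$, which is precisely the statement $\Phi[f(D_1)]=[f(D_2)]$ in the quotient. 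The main technical care I anticipate lies in constructing the $h$-compatible cutoffs $\psi_i$ (which requires that $h$ approximately preserve distances to $Z_i$ near the boundary of the region of disagreement) and in verifying that conjugation by the partial isometry $V$ genuinely implements the quotient isomorphism $\Phi$, given that $V$ is only defined between the $L^2$ spaces over the complements of $Z_i$.
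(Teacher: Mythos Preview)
Your proof is correct and follows essentially the same approach as the paper: reduce by density to Schwartz functions with compactly supported Fourier transform, split $f(D_i)$ using an $h$-compatible cutoff $\psi_i$ near $Z_i$, and use finite propagation speed to show the ``far'' pieces are intertwined by $V$ while the ``near'' pieces lie in $C^*(Z_i\subseteq M_i)$. The only cosmetic differences are your choice of neighborhood radii ($2r,3r$ versus the paper's $r,2r$) and your explicit flagging of the subtleties about constructing the compatible cutoffs and about $V$ being only a partial isometry---points the paper passes over in silence.
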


There is also a ``$D^*$-version'' of this discussion.  Namely, following the forthcoming PhD thesis of Paul Siegel~\cite{siegel}we can define ideals $D^*(Z_i\subseteq M_i;H_i)$ as the closure of the finite propagation, pseudolocal\footnote{``Finite propagation'' is defined with respect to the control space $X$ via the control maps $q_i$; ``pseudolocal'' is defined with respect to the ambient manifold $M_i$.} operators that are supported near $Z_i$ and are locally compact on $M_i\setminus Z_i$.  Once again, conjugation by $U$ induces
an isomorphism of quotient $C^*$-algebras
\[ \Psi \colon D^*(M_1;H_1)/D^*(Z_1\subseteq M_1;H_1) \to D^*(M_2;H_2)/D^*(Z_2\subseteq M_2; H_2). \]

\begin{lemma}\label{rel2} Let notation be as above and let $\chi$ be a normalizing function.   Then
\[ \Psi[ \chi(D_1) ] = [\chi(D_2)] , \]
in the quotient algebra $D^*(M_2;H_2)/D^*(Z_2\subseteq M_2; H_2)$. \end{lemma}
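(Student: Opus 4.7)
The plan is to mirror the proof of Lemma~\ref{rel1}, using that lemma to absorb a ``$C_0$ residue'' of $\chi$ and a finite-propagation-speed argument to absorb the rest. A normalizing function $\chi$ does not lie in any of the usual nice spaces, so the first step is to split it cleanly into a piece with compactly supported Fourier transform (to which finite propagation applies) and a piece in $C_0(\R)$ (to which Lemma~\ref{rel1} applies).

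Concretely, I would fix once and for all a smooth Paley--Wiener mollifier: pick $\kappa\in{\mathcal S}(\R)$ with $\int\kappa=1$ and $\hat\kappa$ compactly supported, and set $\kappa_r(t)=r\kappa(rt)$, so that $\hat{\kappa_r}$ is supported in $(-r,r)$ and $\int\kappa_r=1$. For the given normalizing function $\chi$ (which may be assumed smooth, since two smooth normalizing functions differ by an element of $C_0(\R)$), I would form $\chi_r:=\chi*\kappa_r$. I expect to verify: $\chi_r$ is smooth and bounded, its (distributional) Fourier transform is compactly supported in $(-r,r)$, and $\chi-\chi_r\in C_0(\R)$ with $\chi_r\to\chi$ uniformly on $\R$ as $r\to\infty$ (using uniform continuity of $\chi$ and the common limits $\pm1$ at $\pm\infty$).

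Then I would split $\chi(D_i)=\chi_r(D_i)+(\chi-\chi_r)(D_i)$ and treat the two summands independently. The residue $(\chi-\chi_r)(D_i)$ is a $C_0$-function of $D_i$, so Lemma~\ref{rel1} delivers $\Phi[(\chi-\chi_r)(D_1)]=[(\chi-\chi_r)(D_2)]$ modulo $C^*(Z_2\subseteq M_2;H_2)$; since this $C^*$-ideal sits inside the $D^*$-ideal, this piece is already in the quotient we want. For the low-frequency summand $\chi_r(D_i)$, the compact Fourier support lets me repeat the argument of Lemma~\ref{L3a} (in the form of Lemma~\ref{L3}, allowing odd components): $\chi_r(D_i)$ has propagation at most $r$. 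Since $V$ intertwines $D_1$ and $D_2$ on $L^2(M_1\setminus Z_1;S_1)$ and $L^2(M_2\setminus Z_2;S_2)$, finite propagation speed forces $V\chi_r(D_1)V^*\rho(\phi)=\chi_r(D_2)\rho(\phi)$, and symmetrically on the left, for every $\phi\in C_0(M_2)$ whose support lies outside an $O(r)$-neighborhood of $Z_2$. Hence $V\chi_r(D_1)V^*-\chi_r(D_2)$ is pseudolocal (via Lemma~\ref{L1}), has propagation at most $r$, and is supported in an $O(r)$-neighborhood of $Z_2$ (being zero outside); so it lies in $D^*(Z_2\subseteq M_2;H_2)$. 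Adding the two contributions gives $\Psi[\chi(D_1)]=[\chi(D_2)]$.

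The main technical subtlety will be in the mollification step: $\chi$ is only a bounded tempered distribution, so $\hat\chi$ carries distributional singularities at the origin reflecting the $\pm1$ limits at $\pm\infty$. The point is that multiplying by the smooth compactly supported function $\hat{\kappa_r}$ tames these singularities and yields a bona fide compactly supported distribution $\hat{\chi_r}$, which is exactly what the finite-propagation-speed argument of Lemma~\ref{L3a} needs. The other delicate point, the interpretation of ``conjugation by $V$'' when $V$ is only a partial isometry between subspaces of $H_1$ and $H_2$, is handled exactly as in the setup of $\Phi$ in Lemma~\ref{rel1}.
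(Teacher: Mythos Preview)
Your strategy---split $\chi$ into a piece $\chi_r$ with compactly supported Fourier transform and a $C_0$ residue, then invoke finite propagation speed for the first and Lemma~\ref{rel1} for the second---is the right idea and is close to what the paper does. But there is a genuine gap in your treatment of the $\chi_r$ piece.

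You assert that $V\chi_r(D_1)V^* - \chi_r(D_2)$ lies in $D^*(Z_2\subseteq M_2;H_2)$ because it is pseudolocal, has finite propagation, and is supported in an $O(r)$-neighborhood of $Z_2$. But look back at the definition of $D^*(Z_2\subseteq M_2;H_2)$ in the paragraph preceding Lemma~\ref{rel2}: a fourth condition is required, namely that the operator be \emph{locally compact on $M_2\setminus Z_2$}. Your finite propagation argument shows the difference annihilates $\rho(\phi)$ when $\Supp\phi$ lies outside an $O(r)$-neighborhood of $Z_2$, but says nothing about compactness of $(V\chi_r(D_1)V^* - \chi_r(D_2))\rho(\phi)$ when $\Supp\phi$ sits in the annulus between $Z_2$ and that $O(r)$-neighborhood. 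Since $\chi_r$ does not vanish at infinity, the individual terms $\chi_r(D_i)$ are only pseudolocal, not locally compact, so this cannot be waved away.

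The paper singles out precisely this point as ``the only additional argument that is needed,'' and handles it by reversing the order of quantifiers. Rather than fixing one $r$ in advance, for each compactly supported $\phi\in C_c(M_2\setminus Z_2)$ it chooses a normalizing function $\chi'$ whose (distributional) Fourier transform is supported in $(-r,r)$ with $r< d(Z_2,\Supp\phi)$; then finite propagation makes $(V\chi'(D_1)V^* - \chi'(D_2))\rho(\phi)$ actually \emph{zero}, and since $\chi-\chi'\in C_0(\R)$, the individual residues $(\chi-\chi')(D_i)$ are locally compact by Lemma~\ref{L1}. Your argument can be repaired in exactly this way---run your mollification a second time with a smaller parameter $r'$ adapted to the given $\phi$, and note that $\chi_r-\chi_{r'}\in C_0(\R)$---but as written it omits what the paper regards as the crux of the lemma.
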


\begin{proof}  The proofs of both Lemmas~\ref{rel1} and~\ref{rel2} rely on the finite propagation speed method.  First we give the proof for~\ref{rel1}.  Suppose that $f\in{\mathcal S}(\R)$ and has Fourier transform $\hat{f}$ supported in $(-r,r)$.   As usual, we write
 \[ f(D) = \frac{1}{2\pi} \int_{-\infty}^\infty \hat{f}(t) e^{itD}\,dt \]
and use the fact that, for a Dirac-type operator $D$, $e^{itD}$ has propagation $|t|$.
 Let $\psi_i\colon M_i\to [0,1]$ be a smooth function equal to 1 on an $r$-neighborhood of $Z_i$ and vanishing off a $2r$-neighborhood of $Z_i$, and such that $\psi_1=\psi_2\circ  h$ on $M_1\setminus Z_1$.  Write
\[ f(D_i) = f(D_i)\rho(\psi_i) + f(D_i)(1-\rho(\psi_i)). \]
Since $f(D_i)$ has propagation $r$, the first term belongs to $C^*(Z_i\subseteq H_i)$.  By finite propagation speed we have
\[ V^*e^{itD_1}\rho(1-\psi_1)V = e^{itD_2}\rho(1-\psi_2)\quad\mbox{for $|t|<r$.}\]
Consequently,
\[ V^*f(D_1)(1-\rho(\psi_1))V = f(D_2)(1-\rho(\psi_2))\]
and the proof is complete for $f$ having compactly supported Fourier transform.  The general result follows, since such $f$ are norm-dense in $C_0(\R)$.

The proof of Lemma~\ref{rel2} follows a similar pattern, where the Fourier transform $\hat{\chi}$ must now be understood as a distribution with a mild singularity at 0.  The only additional argument that is needed is to show that \begin{equation}\label{eq3}(V\chi(D_1)V^*-\chi(D_2))\rho(\phi)\end{equation} is   compact for $\phi\in C_0(M_2\setminus Z_2)$.  Suppose in fact that $\phi$ is compactly supported.  Then there is a constant $r>0$ such that $d(Z_2,\Supp(\phi))>r$ and, if we should choose the normalizing function $\chi$ to have Fourier transform supported in $(-r,r)$, then finite propagation speed shows that the displayed quantity in~\ref{eq3} is not just compact --- it is actually \emph{zero}!  The general case follows from this particular one, since any two normalizing functions differ by some $g\in C_0(\R)$, and we already know that for such $g$, the \emph{individual} terms $g(D_1)$ and $g(D_2)$ are locally compact.
   \end{proof}

Now let $\pi_i$ denote the quotient map $C^*(M_i)\to C^*(M_i)/C^*(Z_i\subseteq M_i)$ or $D^*(M_i)\to D^*(M_i)/D^*(Z_i\subseteq M_i)$ as appropriate.  Let us define $A$ to be the pull-back $C^*$-algebra
\[ A = \{(T_1,T_2)\in C^*(M_1;H_1)\oplus C^*(M_2;H_2): \Phi(\pi_1(T_1))=\pi_2(T_2) \}. \]
Similarly define $B$ to be the pull-back $C^*$-algebra
\[ B = \{(T_1,T_2)\in D^*(M_1;H_1)\oplus D^*(M_2;H_2): \Phi(\pi_1(T_1))=\pi_2(T_2) \}. \]
Then $A$ is an ideal in $B$.  Let $D$ denote the Dirac operator on the disjoint union $M_1\sqcup M_2$. Lemmas~\ref{rel1} and~\ref{rel2} above show that for a normalizing function $\chi$, the operator $\chi(D)$ is an element of $B$, and that for a function $f\in C_0(\R)$, the operator $f(D)$ is an element of the ideal $A$.  Consequently there is defined an index of $D$
\begin{equation}\label{eq1} \Index_Z(D) \in K_j(A). \end{equation}
The group $K_j(A)$ can be decomposed as a direct sum.  In fact, let $U\colon H_1\to H_2$ be a \emph{covering isometry} for the identity map~\cite[Definition 6.3.9]{HR3} that agrees on $L^2(M_1\setminus Z_1)$ with the isomorphism $ L^2(M_1\setminus Z_1) \to L^2(M_2\setminus Z_2)$ induced by $h$. (The hypothesis that $h$ boundedly commutes with the control maps assures the existence of such an isometry.) Then there is a split short exact sequence
\begin{equation}\label{eq2} 0 \to C^*(Z_1\subseteq M_1) \to A \to C^*(M_2)\to 0, \end{equation}
where the first map is $a \mapsto (a,0)$, the second is $(a_1,a_2)\mapsto a_2$, and the splitting maps $a$ to $(U^*aU,a)$.  From this split short exact sequence we obtain a direct sum decomposition
\[ K_j(A) = K_j(C^*(Z_1\subseteq M_1)) \oplus K_j(C^*(M_2)).\]
\begin{definition}\label{relinddef} The \emph{relative index} of the above data is the component in $K_j(C^*(Z_1\subseteq M_1)) = K_j(C^*(Z))$ of $\Index_Z(D)\in K_j(A)$.  We denote it by $\Index_r(D_1,D_2)$. \end{definition}

The generalization of Gromov-Lawson's relative index theorem is then

\begin{theorem} Let $(M_i,D_i,q_i)$ be a set of relative-index data over $(X,Z)$, with the notation described above.  Suppose that the operators $D_i$ have uniformly positive Weitzenbock curvature operators outside $Z_i$. Then each $D_i$ has a localized coarse index in $K_j(C^*(Z))$, by Theorem~\ref{LCA}, and the identity
\[ \Index_Z(D_1) - \Index_Z(D_2) = \Index_r(D_1,D_2) \]
holds in $K_j(C^*(Z))$.
\end{theorem}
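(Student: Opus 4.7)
The plan is to use Lemma~\ref{L2} to refine the construction of $\Index_Z(D)$ so that the two individual localized indices $\Index_{Z_i}(D_i)$ appear together in a smaller ideal, and then trace through the splitting~\eqref{eq2} to read off the $K_j(C^*(Z_1\subseteq M_1))$-component which defines $\Index_r(D_1,D_2)$.

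Set $A' := C^*(Z_1\subseteq M_1)\oplus C^*(Z_2\subseteq M_2)$, viewed as a $C^*$-subalgebra of $A$ (the compatibility condition is automatic when both components have vanishing $\pi_i$-image), and note that $A'$ is an ideal in $B$ because each $C^*(Z_i\subseteq M_i)$ is an ideal in $D^*(M_i)$. Choose a normalizing function $\chi$ with $\chi^2-1\in C_c(-a,a)$, where $a$ is a common positivity constant for $R_{D_i}$ outside $Z_i$. By Lemma~\ref{L2}, $\chi(D_i)^2-1\in C^*(Z_i\subseteq M_i)$ for each $i$, so $\chi(D)^2-1\in A'$; together with Lemma~\ref{rel2} this shows that $[\chi(D)]$ is a self-adjoint involution in $B/A'$, and the boundary map of $0\to A'\to B\to B/A'\to 0$ produces a refined index
\[ \Index_{A'}(D)\in K_j(A') = K_j(C^*(Z_1\subseteq M_1))\oplus K_j(C^*(Z_2\subseteq M_2)). \]
Naturality of $\partial$ with respect to the projections $A'\to C^*(Z_i\subseteq M_i)$ and $B\to D^*(M_i)$ identifies its two components with $\Index_{Z_1}(D_1)$ and $\Index_{Z_2}(D_2)$; naturality with respect to $A'\hookrightarrow A$ sends it to $\Index_Z(D)\in K_j(A)$.

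It then remains to compute the retraction $K_j(A)\to K_j(C^*(Z_1\subseteq M_1))$ arising from the splitting~\eqref{eq2} (namely $\alpha\mapsto \alpha - s_*(p_*(\alpha))$, where $p$ is the second-factor projection and $s(a)=(U^*aU,a)$) applied to $\Index_Z(D)$. A representative computation gives
\[ (\beta_1,\beta_2) - s(p(\beta_1,\beta_2)) = (\beta_1 - U^*\beta_2 U,\,0), \]
so the composite $K_j(A')\to K_j(A)\to K_j(C^*(Z_1\subseteq M_1))$ is $(\beta_1,\beta_2)\mapsto \beta_1 - U_*\beta_2$, where $U_*\colon K_j(C^*(Z_2\subseteq M_2))\to K_j(C^*(Z_1\subseteq M_1))$ is the map induced by conjugation by the covering isometry $U$. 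Applied to $\Index_{A'}(D)$ this gives $\Index_r(D_1,D_2) = \Index_{Z_1}(D_1) - U_*\Index_{Z_2}(D_2)$. Because $U$ is a covering isometry for the identity on $X$, one has $q_{1*}\circ U_* = q_{2*}$ on $K$-theory, and pushing forward via the canonical identification $q_{1*}\colon K_j(C^*(Z_1\subseteq M_1))\to K_j(C^*(Z))$ yields the desired relation $\Index_Z(D_1) - \Index_Z(D_2) = \Index_r(D_1,D_2)$ in $K_j(C^*(Z))$.

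The main technical nuisance is not analytic (all the analysis is contained in Lemma~\ref{L2}) but bookkeeping: one must verify that conjugation by the non-unitary isometry $U$ does carry $C^*(Z_2\subseteq M_2)$ into $C^*(Z_1\subseteq M_1)$ and induces on $K$-theory the canonical transport across the two realizations of $K_*(C^*(Z))$, so that $q_{1*}\circ U_* = q_{2*}$. Once these identifications are in place, the proof is an exercise in naturality of the boundary map and of the split exact sequence~\eqref{eq2}.
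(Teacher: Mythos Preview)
Your proposal is correct and follows essentially the same route as the paper: the paper introduces the ideal $J=C^*(Z_1\subseteq M_1)\oplus C^*(Z_2\subseteq M_2)$ (your $A'$), uses Lemma~\ref{L2} to see that $\Index_Z(D)$ lifts to $K_j(J)$ with components $(\Index_Z(D_1),\Index_Z(D_2))$, and then reads off the first component under the splitting of~\eqref{eq2}. The only cosmetic difference is that the paper silently absorbs your ``technical nuisance'' by immediately identifying both summands of $J$ with $C^*(Z)$ and declaring the restricted splitting to be $a\mapsto(a,a)$, whereas you keep the covering isometry $U$ explicit until the final pushforward; both amount to the standard fact that a covering isometry for the identity induces the canonical identification on $K$-theory.
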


(The case considered by Gromov and Lawson can be recovered by taking $X=\R^+$, $Z=\{0\}$.)

\begin{proof} Let $A$ be the pull-back algebra that we introduced in our definition of the relative index (so that $A$ consists of pairs $(T_1,T_2)$, $T_i\in C^*(M_i)$, that ``agree away from $Z$''.)   Let $J$ be the ideal in $A$ that consists of pairs $(T_1,T_2)$ where each $T_i$ belongs to $C^*(Z_i\subseteq M_i)$; in fact, $J$ is simply the direct sum $C^*(Z_1\subseteq M_1)\oplus C^*(Z_2\subseteq M_2)$.   Let $D$ denote the Dirac operator on $M_1\sqcup M_2$.

Because of the positive curvature away from $Z$ it follows from Lemma~\ref{L2}  that, for $f\in C_0(\R)$, $f(D)$ belongs to the ideal $J$.  Thus, in this case, the index $\Index_Z(D)$ defined in Equation~\ref{eq1} in fact belongs to $ K_j(J) = K_j(C^*(Z))\oplus K_j(C^*(Z))$,]
and it is apparent from the definitions that, in terms of this direct sum decomposition,
\[ \Index_Z(D) =(\Index_Z(D_1),\Index_Z(D_2)). \]
The definition of the relative index tells us to take the component of $\Index_Z(D)$ in $K_j(C^*(Z))$ in the direct sum decomposition coming from the split short exact sequence~\ref{eq2}.
Restricted to $J$, this sequence takes the form
\[ 0\to C^*(Z) \to C^*(Z)\oplus C^*(Z) \to C^*(Z) \to 0, \]
where the first map is inclusion on the first factor, the second is projection on the second factor, and the splitting used is $a\mapsto (a,a)$.  Using this splitting, one finds that the relevant component of $\Index_Z(D) = (\Index_Z(D_1),\Index_Z(D_2))$ is $\Index_Z(D_1)-\Index_D(Z_2)$, as required.
\end{proof}

 As we observed above, it is an important feature of the Gromov-Lawson relative index that it depends only on the geometry of a neighborhood of the ``region of disagreement''.  The corresponding result is also true in our more general context, and is a key to the applications of the relative index concept in~\cite{JR18}.

\begin{proposition}\cite[Theorem 3.12]{JR18} The relative index of Definition~\ref{relinddef} depends only on the geometry of a metric neighborhood of $Z_1$ and $Z_2$ and the operators thereon. \end{proposition}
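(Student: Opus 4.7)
The plan is to exploit finite propagation speed to show that the relative index admits representatives supported in arbitrarily small neighborhoods of the $Z_i$, and that these representatives are determined by the data on those neighborhoods alone.

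First, I would unpack the construction of $\Index_r(D_1,D_2)$. By Lemma~\ref{L2} together with the positive-curvature hypothesis (which is implicit in any statement to which this proposition applies), the operators $\chi(D_i)^2-1$ and $f(D_i)$ (for $f\in C_c(-a,a)$) actually lie in the ideal $J=C^*(Z_1\subseteq M_1)\oplus C^*(Z_2\subseteq M_2)$, and the relative index is the first component of the resulting class in $K_j(J)$ under the splitting~\ref{eq2}. The key observation is that, as noted right after the definition of $C^*(Z\subseteq X)$, for any neighborhood $U_i$ of $Z_i$ the natural inclusion $C^*(Z_i\subseteq U_i)\hookrightarrow C^*(Z_i\subseteq M_i)$ is a $K$-theory isomorphism.

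Next, suppose we have two sets of data $(M_i,D_i,q_i,h)$ and $(M_i',D_i',q_i',h')$ that coincide (as Riemannian manifolds with Clifford bundles, Dirac operators, and control maps) on open neighborhoods $U_i\supseteq Z_i$ and $U_i'\supseteq Z_i'$ under a fixed identification. I would choose $r>0$ so small that the closed $4r$-neighborhood of $Z_i$ still lies inside $U_i$, and then represent both relative indices using normalizing functions $\chi$ and cutoff functions $f\in C_c(-a,a)$ whose Fourier transforms are supported in $(-r,r)$. By exactly the finite propagation speed argument used in Lemma~\ref{L3a} and in the proof of Lemma~\ref{rel1}, the operators $f(D_i)\rho(\psi_i)$ and $\chi(D_i)\rho(\psi_i)$ — with $\psi_i$ a bump supported in the $2r$-neighborhood of $Z_i$ — depend only on the geometry inside the $3r$-neighborhood of $Z_i$, which is common to the two data sets. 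Hence these operators are literally identified under the chosen isomorphism, and the complementary pieces $f(D_i)\rho(1-\psi_i)$ have norm at most $\varepsilon$ by Lemma~\ref{L3}.

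The final step is to translate operator-level equality into $K$-theory-level equality. The representatives above differ from the full index representatives by elements that lie $\varepsilon$-close to zero in the ideal $J$, and the well-definedness arguments already used in the paper (independence from the choice of normalizing function $\chi$, up to $C^*(Z)$-equivalence) show that the resulting $K$-theory class in $K_j(C^*(Z_1\subseteq M_1))=K_j(C^*(Z))$ is unchanged. Since this class is produced entirely from data inside the common neighborhoods $U_i$, the two relative indices agree.

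The main obstacle I expect will be bookkeeping: one must fix identifications of the $X$-modules $H_i$ and $H_i'$ that are compatible with the common neighborhood identification, check that the covering isometries $U,U'$ used in~\ref{eq2} can be chosen to match on the relevant subspaces, and verify that the splittings used to extract the first summand are compatible. Once the bookkeeping is in place, the substance of the proof is essentially the finite-propagation-speed argument already present in Lemma~\ref{L2} and Lemma~\ref{rel1}.
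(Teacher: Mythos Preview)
There is a genuine gap: you invoke Lemma~\ref{L2} and a ``positive-curvature hypothesis (which is implicit in any statement to which this proposition applies)'', but the paper explicitly notes immediately after stating the proposition that it is \emph{independent of any positive-curvature hypotheses}. The relative index of Definition~\ref{relinddef} is constructed purely from the agreement of $D_1$ and $D_2$ outside $Z_1,Z_2$; the class $\Index_Z(D)$ lives in $K_j(A)$, and one extracts the $K_j(C^*(Z))$-component via the splitting of~\ref{eq2}. Without positive curvature there is no reason for $f(D_i)$ to land in $C^*(Z_i\subseteq M_i)$, so your reduction to the ideal $J=C^*(Z_1\subseteq M_1)\oplus C^*(Z_2\subseteq M_2)$ is not available, and the representatives you propose do not exist in the algebra you need them in.

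A direct finite-propagation argument in your spirit may still be feasible, but it would have to work at the level of the pull-back algebras $A$ and $B$ and track how the boundary map $\partial\colon K_{j+1}(B/A)\to K_j(A)$ and the splitting of~\ref{eq2} interact with cutoffs near $Z_i$; this is substantially more delicate than the ``operators agree up to $\epsilon$'' heuristic, and the ``bookkeeping'' you flag is really the whole difficulty. The paper does not attempt this. Instead it observes that the relative index is the image, under the coarse assembly map $K_*(Z)\to K_*(C^*(Z))$, of a relative $K$-homology class constructed in~\cite{JRPS33}, and then invokes Proposition~4.8 of that paper, which establishes (by Paschke-duality/sheaf-theoretic methods) that the $K$-homology class itself depends only on the geometry near the region of disagreement.
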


Notice that this statement is independent of any positive-curvature hypotheses.

\begin{proof} This follows from the results of~\cite{JRPS33}.  In that paper, it is shown that to a set of relative index data (as described in this section), one may associate a \emph{relative homology class} that lies in the $K$-homology group $K_*(Z)$.  Moreover, comparison of the definitions shows that our coarse relative index is simply the image of this relative homology class under the coarse assembly map
\[ A: K_*(Z) \to K_*(C^*(Z)). \]
The result is therefore a consequence of Proposition 4.8 of~\cite{JRPS33}, which states that in fact the relative \emph{homology class} of a set of relative index data depends only on the geometry in a neighborhood of the region of disagreement.  \end{proof}

\end{document}